\newcommand{\thmref}[1]{Theorem~\ref{#1}}
\newcommand{\lemref}[1]{Lemma~\ref{#1}}
\newcommand{\eqnref}[1]{~(\ref{#1})}
\newcommand{\germ}{\mathfrak}
\newtheorem{thm}{Theorem}[section]
\newtheorem{lem}[thm]{Lemma}
\theoremstyle{definition}
\subjclass{Primary 17B67, 81R10}
\numberwithin{equation}{section}
\begin{document}


\title{}

\title{On the Universal Central Extension of Hyperelliptic Current Algebras.}
\author{ Ben Cox}
\keywords{Universal Central Extensions, Krichever-Novikov Algebras, Hyperelliptic Current Algebras}
\address{Department of Mathematics \\
University of Charleston \\
66 George St.  \\
Charleston, SC 29424, USA}\email{coxbl@cofc.edu}

 \begin{abstract}  Let $p(t)\in\mathbb C[t]$ be a polynomial with distinct roots and nonzero constant term.   We describe, using Fa\'a de Bruno's formula and Bell polynomials, the universal central extension in terms of generators and relations for the hyperelliptic current Lie algebras $\mathfrak g\otimes R$ whose coordinate ring is of the form $R=\mathbb C[t,t^{-1},u\,|\, u^2=p(t)]$.  
 \end{abstract}
\date{}
\thanks{ The author would like to thank the Mittag-Leffler Institute for its hospitality and support where most of this paper was written.  Travel to the Mittag-Leffler Institute was partially supported by a Simons Collaborations Grant.}

\subjclass[2000]{Primary 17B37, 17B67; Secondary 81R10, 81B50}

\maketitle

\section{Introduction}  Let $p(t)\in\mathbb C[t]$ be a separable polynomial with nonzero constant term. 
In this paper, using a general result of C. Kassel, we explicitly describe the universal central extension of the Lie algebra $\mathfrak{g}\otimes R$ where $R=\mathbb C[t,t^{-1},u\,|\, u^2=p(t)]$.  An algebraic curve of the form $\mathbb C[t,u\,|\, u^2=p(t)]$ is called a hyperelliptic curve if $\deg p>4$, so we called the universal central extension of $\mathfrak{g}\otimes R$, a hyperelliptic current algebra.   Modulo most of the center, this algebra is a particular example of a Krichever-Novikov current algebra (see \cite{MR902293},\cite{MR925072}, \cite{MR998426}).  See also the books \cite{MR2985911} and \cite{MR3237422} for more information and background about these algebras. 

In the 1990's M. Bremner explicitly described in terms of generators, relations and using certain families of polynomials (ultraspherical and Pollaczek), the structure constants for the universal central extension of algebras of the form $\mathfrak{g}\otimes R$ where $R=\mathbb C[t,t^{-1},u\,|\, u^2=p(t)]$ where $p(t)=t^2-2bt+1$ and $p(t)=t^3-2bt^2 +t$, $b\neq \pm 1$ (see \cite{MR1261553,MR1303073,MR1249871}). He determined more generally the dimension of the universal central extension for affine Lie algebras of the form $\mathfrak g\otimes R$ where $R$ is the ring of regular functions defined on an algebraic curve with any number of points removed. This was derived using C. KasselÕs result \cite{MR694130,MR772062}) where one knows that the center is isomorphic as a vector space to $\Omega_R/dR$.  The case where $p(t)=t^4-2bt^2+1$ was studied in \cite{MR2813377} and \cite{MR3090080} where particular cases of the associated Jacobi polynomials made their appearance. Thus the contents of the current paper includes as special cases most of the above work.  If the constant term of the polynomial is zero, then some small modifications of the present work will yield also an explicit description of the universal central extension.  This is left to the reader. We will later review some of the above cited work as needed.

It should also be mentioned that in joint work with R. Lu, X. Guo and K. Zhao we have described the center of the universal central extension of the Lie algebra $\text{Der}(R)$ for $R$ as above (\cite{CGLZ}) and for the $n$-point ring $R=\mathbb C[t,(t-a_1)^{-1},\dots, (t-a_n)^{-1}]$ for distinct complex numbers $a_i$ (\cite{MR3211093}).    Here also interesting families of polynomials arise in their description (for example the associated Legendre polynomials when $p(t)=t^{2k}-2bt^k+1$, $b\neq \pm 1$, $k\geq 1$). 

In our previous work (see \cite{MR2373448,MR3245847,MR2541818,MR2813377}), the authors we used such detailed information to obtain certain free-field realizations of the three point, four point, elliptic affine and DJKM algebras depending on a parameter $r=0,1$ that correspond to two different normal orderings. These later realizations are analogues of Wakimoto type realizations which have been used by Schechtman and Varchenko and various other authors in the affine setting to pin down integral solutions to the Knizhnik-Zamolodchikov differential equations (see for example \cite{MR1136712}, \cite{MR1138049}, \cite{MR2001b:32028}, \cite{MR1077959}). Such realizations have also been used in the study of the Drinfeld-Sokolov reduction in the setting of W-algebras and in E. Frenkel's and B. Feigin's description of the center of the completed enveloping algebra of an affine Lie algebra (see \cite{MR1309540, MR2146349}). 

Our main result \thmref{mainresult} is a description of the generators and relations for the hyperelliptic current algebra $\mathfrak g\otimes R$ in terms of polynomials arising from Fa\'a de Bruno's formula and Bell polynomials.  

In future work the author plans to use results of this paper to describe free-field realizations of the universal central extension of these hyperelliptic current algebras.  One can also see that the group of automorphisms of a commutative associate algebra $R$ over $\mathbb C$ induces an action of that group on $\Omega_R/dR$.  In future work we will also describe for certain algebras $R$ how $\Omega_R/dR$ decomposes into a direct sum of irreducible submodules under this action.

\section{Background, Fa\'a de Bruno's Formula and Bell Polynomials}
As C. Kassel showed the universal central extension of the current algebra $\mathfrak g\otimes R$ where $\mathfrak g$ is a finite dimensional Lie algebra defined over $\mathbb C$, is a vector space $\hat{\mathfrak g}=(\mathfrak g\otimes R)\oplus \Omega_R^1/dR$ where $\Omega_R^1/dR$ is the space of K\"ahler differentials modulo the exact forms $dR$ and where the Lie bracket is given by 
\begin{equation}
[x\otimes f,y\otimes g]:=[xy]\otimes fg +(x,y)\overline{f\,dg},\quad [x\otimes f,\omega]=0,\quad [\omega,\omega']=0,
\end{equation}
where $x,y\in \mathfrak g$, and $\omega,\omega'\in \Omega_R^1/dR$.   Here $(x,y)$ denotes the Killing form on $\mathfrak g$ and $\overline{f\,dg}$ denotes the residue class in $\Omega^1_R/dR$.

Consider the polynomial $p(t)=t^n+a_{n-1}t^{n-1}+\cdots+a_0$, where $a_i\in\mathbb C$ and $a_n=1$.  Fundamental to the description of $\hat{\mathfrak g}$ where $R=\mathbb C[t,t^{-1},u\,|\,u^2=p(t)]$, is the following:
\begin{thm}[\cite{MR1303073}, Theorem 3.4]   Let $R$ be as above.  The set
\begin{equation}
\{\overline{t^{-1}\,dt},\overline{t^{-1}u\,dt},\dots, \overline{t^{-n}u\,dt}\}
\end{equation}
forms a basis of $\Omega_R^1/dR$ (omitting $\overline{t^{-n}u\,dt}$ if $a_0=0$).
\end{thm}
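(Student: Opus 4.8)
The plan is to exploit the $\mathbb Z/2\mathbb Z$-grading on $R=S\oplus Su$, where $S:=\mathbb C[t,t^{-1}]$, given by the parity of the exponent of $u$. Since $u^2-p(t)$ is homogeneous of degree $0$, this grading descends to the presentation $\Omega_R^1=(R\,dt\oplus R\,du)/R\cdot(2u\,du-p'(t)\,dt)$, with $dt$ of degree $0$ and $du$ of degree $1$; as $d$ is homogeneous, $dR$ is a graded subspace, so $\Omega_R^1/dR=(\Omega_R^1/dR)_0\oplus(\Omega_R^1/dR)_1$. The proposed basis is homogeneous, $\overline{t^{-1}\,dt}$ lying in degree $0$ and each $\overline{t^{-i}u\,dt}$ in degree $1$, so it suffices to treat the two summands separately.

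First I would dispose of the degree-$0$ part. Here $(\Omega_R^1)_0=(S\,dt\oplus S\,u\,du)/S\cdot(2u\,du-p'(t)\,dt)$ is a free $S$-module of rank one generated by the class of $dt$ (the relating vector has coordinates $(-p',2)$, which is unimodular since $2$ is a unit in $S$), with $u\,du\mapsto\tfrac12p'(t)\,dt$. Under the resulting isomorphism $(\Omega_R^1)_0\cong S$, the subspace $(dR)_0$ maps onto the span of the $d(t^k)=kt^{k-1}\,dt$, that is, onto $\operatorname{span}_{\mathbb C}\{t^m\,dt:m\neq-1\}$, whence $(\Omega_R^1/dR)_0\cong S/\operatorname{span}\{t^m:m\neq-1\}=\mathbb C\,\overline{t^{-1}\,dt}$ is one-dimensional and $\overline{t^{-1}\,dt}\neq 0$.

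The degree-$1$ part I would handle by passing to a combinatorial model. Modulo $(dR)_1$ one has $\overline{t^k\,du}=-k\,\overline{t^{k-1}u\,dt}$ (from $\overline{d(t^ku)}=0$), so $(\Omega_R^1/dR)_1$ is spanned by the classes $\omega_m:=\overline{t^m u\,dt}$, $m\in\mathbb Z$; writing out the remaining relations (those coming from $2u\,du=p'(t)\,dt$ multiplied by $t^ku$, after clearing the $du$'s) identifies $(\Omega_R^1/dR)_1$ with $\bigl(\bigoplus_{m\in\mathbb Z}\mathbb C\,\omega_m\bigr)/\langle R_k:k\in\mathbb Z\rangle$, where, with $p(t)=\sum_{j=0}^na_jt^j$ and $a_n=1$,
\[
R_k:\qquad 2a_0k\,\omega_{k-1}+\sum_{j=1}^{n}a_j(2k+3j)\,\omega_{k+j-1}=0 .
\]
Assume $a_0\neq0$. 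For $m\ge0$ the relation $R_{m-n+1}$ has the invertible coefficient $2m+n+2$ on its top term $\omega_m$, so it expresses $\omega_m$ through $\omega_{m-n},\dots,\omega_{m-1}$; for $m\le-2$ the relation $R_{m+1}$ has the invertible coefficient $2a_0(m+1)$ on its bottom term $\omega_m$, so it expresses $\omega_m$ through $\omega_{m+1},\dots,\omega_{m+n}$. Iterating these (first upward for $m\ge0$, then downward for $m\le-2$) puts every $\omega_m$ in $\operatorname{span}\{\omega_{-1},\dots,\omega_{-n}\}$, so $\Omega_R^1/dR$ is spanned by the asserted $n+1$ elements; in particular it is finite-dimensional. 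Hence $\dim(\Omega_R^1/dR)_1$ equals the dimension of the space of sequences $(x_m)_{m\in\mathbb Z}$ annihilating all the $R_k$, i.e. solving $2a_0kx_{k-1}+\sum_{j=1}^na_j(2k+3j)x_{k+j-1}=0$. In that system $x_{-1}$ is unconstrained, $R_0$ is the single equation $\sum_{j=1}^nja_jx_{j-1}=0$ (with the invertible coefficient $n$ on $x_{n-1}$) in $x_0,\dots,x_{n-1}$, and the equations with $k\ge1$ (resp. $k\le-1$) recursively determine $x_m$ for $m\ge n$ (resp. $m\le-2$) with no further constraint; so the solution space has dimension $n$, parametrized by $(x_{-1},x_0,\dots,x_{n-2})$. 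Combined with the spanning statement, $\{\overline{t^{-i}u\,dt}\}_{i=1}^n$ is a basis of $(\Omega_R^1/dR)_1$, and the theorem follows. (If $a_0=0$ the term $2a_0k\,\omega_{k-1}$ disappears, $x_{-1}$ becomes determined by $R_{-1}$, the solution space drops to dimension $n-1$, and one omits $\overline{t^{-n}u\,dt}$; the rest is unchanged.)

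The hard part will be the degree-$1$ linear independence, i.e. showing the quotient by the relations $R_k$ does not collapse below dimension $n$. The dimension count for the two-sided recurrence above is the cleanest route I see; one could instead appeal to the de Rham description $\Omega_R^1/dR\cong H^1_{\mathrm{dR}}(\operatorname{Spec}R)$ and to the first Betti number $2g+N-1$ of the punctured curve (which works out to $n+1$ whether $n$ is odd or even), but that invokes more machinery than the elementary bookkeeping needs.
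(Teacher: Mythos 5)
The paper does not prove this statement at all: it is imported verbatim from Bremner \cite{MR1303073}, Theorem 3.4, so there is no internal proof to compare yours against, and what you have written is a correct, self-contained substitute. Your route is also very much in the spirit of the computations the paper does perform: your relations $R_k$ are exactly the congruence $2ka_0\,\overline{t^{k-1}u\,dt}\equiv-\sum_{j=1}^{n}(3j+2k)a_j\,\overline{t^{k+j-1}u\,dt}$ that the paper uses to set up its recursions for the $P_{k,i}$ and $Q_{m,i}$, read once ``upward'' and once ``downward''. The parity decomposition of $\Omega_R^1/dR$, the presentation $\Omega_R^1\cong(R\,dt\oplus R\,du)/R\,(2u\,du-p'(t)\,dt)$ (right-exactness of the conormal sequence is all that is needed), the elimination of the $du$'s via $\overline{d(t^ku)}=0$, and the identification of the odd part with $\bigl(\bigoplus_m\mathbb C\,\omega_m\bigr)/\langle R_k\rangle$ are all sound; the spanning argument is the standard two-sided reduction, and the dimension count via the annihilator is valid because each $R_k$ with $k\ge1$ pivots on its top variable, each $R_k$ with $k\le-1$ on its bottom variable, and $R_0$ contributes the single constraint $\sum_{j=1}^n 3ja_jx_{j-1}=0$ (you dropped the factor $3$, which is harmless), giving exactly an $n$-dimensional solution space and hence the basis when $a_0\neq0$, which is the paper's standing hypothesis. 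The only place you are glib is the $a_0=0$ parenthetical: there the downward reductions, both for spanning and in the dual system, must pivot on the $j=1$ coefficient, so you need $a_1\neq0$; this does hold, but only because $p$ is assumed separable (so $t=0$ is at most a simple root), and that hypothesis should be invoked explicitly, together with the corresponding change in the downward spanning recursion, rather than saying ``the rest is unchanged''.
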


A straightforward calculation shows that 
\begin{lem}[\cite{MR2813377}] If $u^m=p(t)$ and $R=\mathbb C[t,t^{-1},u\,|\,u^m=p(t)]$, then $\Omega^1_R/dR$, one has 
\begin{equation}\label{recursion1}
((m+1)n+im)t^{n+i-1}u\,dt\equiv -\sum_{j=0}^{n-1}((m+1)j+mi)a_jt^{i+j-1}u\,dt\mod dR.
\end{equation}
\end{lem}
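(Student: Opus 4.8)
The plan is to obtain the asserted congruence as the coefficient expansion of a single exact differential. Since $\Omega^1_R/dR$ kills every exact form, it suffices to choose an element of $R$ whose differential, once cleared of powers of $u$, reproduces the claimed relation. The right choice is $t^iu^{m+1}$: because $u^{m+1}=u\cdot u^m=u\,p(t)$, this element genuinely lies in $R$, so $d(t^iu^{m+1})\equiv 0 \mod dR$, and expanding it yields the identity.

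First I would differentiate the defining relation $u^m=p(t)$ to get $m\,u^{m-1}\,du=p'(t)\,dt$ in $\Omega^1_R$, and then multiply through by $u$ to clear the negative power, obtaining the denominator-free identity $m\,p(t)\,du=u\,p'(t)\,dt$. Next I would compute, by the Leibniz rule,
\[
d(t^iu^{m+1})=i\,t^{i-1}u^{m+1}\,dt+(m+1)\,t^iu^m\,du=i\,t^{i-1}p(t)\,u\,dt+(m+1)\,t^ip(t)\,du,
\]
and substitute the previous identity for $p(t)\,du$; multiplying by $m$ this becomes
\[
m\,d(t^iu^{m+1})=\bigl(mi\,t^{i-1}p(t)+(m+1)\,t^ip'(t)\bigr)u\,dt .
\]
Since the left-hand side is exact, the bracketed expression times $u\,dt$ is $\equiv 0 \mod dR$.

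Finally I would substitute $p(t)=\sum_{j=0}^na_jt^j$ and $p'(t)=\sum_{j=0}^nja_jt^{j-1}$, using $a_n=1$, so that the coefficient of $t^{i+j-1}u\,dt$ is $\bigl(mi+(m+1)j\bigr)a_j$; isolating the top term $j=n$ and transposing the remaining ones produces exactly
\[
\bigl((m+1)n+im\bigr)t^{n+i-1}u\,dt\equiv -\sum_{j=0}^{n-1}\bigl((m+1)j+mi\bigr)a_jt^{i+j-1}u\,dt \mod dR .
\]
The computation itself is routine; the only point requiring care is staying inside $R$ — one must multiply the relation coming from $d(u^m)=dp$ by $u$ (equivalently, differentiate $t^iu^{m+1}$ rather than $t^iu$) before anything else, since otherwise negative powers of $u$, or of $p(t)$, intrude and the manipulations no longer live in $\Omega^1_R/dR$.
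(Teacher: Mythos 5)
Your argument is correct: expanding $m\,d(t^iu^{m+1})=\bigl(mi\,t^{i-1}p(t)+(m+1)t^ip'(t)\bigr)u\,dt$ with $t^iu^{m+1}=t^iu\,p(t)\in R$, and isolating the $j=n$ term with $a_n=1$, gives precisely \eqnref{recursion1}, and your care in multiplying $d(u^m)=p'(t)\,dt$ by $u$ rather than dividing by $u^{m-1}$ (which is not invertible in $R$) is exactly the right precaution. The paper offers no proof of this lemma, labeling it a straightforward calculation cited from \cite{MR2813377}, and your computation is the standard one intended there, so there is nothing to add.
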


Take from now on $m=2$ and 
$$
p(t)=(t-\alpha_1)\cdots (t-\alpha_n)=\sum_{i=0}^na_it^i
$$ where the $\alpha_i$ are distinct complex numbers and fix $R=\mathbb C[t,t^{-1},u\,|\,u^2=p(t)]$ so that $R$ is a regular ring. Motivated by \eqnref{recursion1} we let $P_{k,i}:=P_{k,i}(a_0,\dots, a_{n-1})$, $k\geq -n$, $-n\leq i\leq -1$ be the polynomials in the $a_i$ satisfying the recursion relations 
\begin{equation}\label{recursion2}
(2k+n+2)P_{k,i}=-\sum_{j=0}^{n-1}(3j+2k-2n+2)a_jP_{k-n+j,i}
\end{equation}
for $k\geq 0$ with the initial condition
$P_{k,i}=\delta_{k,-i}$, $-n\leq k\leq -1$.
 Now consider the formal power series 
\begin{equation}
P_i(z):=P_i(a_0,\dots, a_{n-1}, z):=\sum_{k\geq -n}P_{k,i}z^{k+n}=\sum_{n\geq 0}P_{k-n,i}z^k
\end{equation}
Then one can show that $P_i(z)$ must satisfy the differential equation
\begin{equation}
\frac{d}{dz}P_i(z)-\frac{Q(z)}{2z\bar P(z)}P_i(z)=\frac{R_i(z)}{2z\bar P(z)}
\end{equation}
where
\begin{gather*}
\bar P(z):=\sum_{j=0}^na_jz^{n-j},\quad Q(z):=z\bar P'(z)+(n-2)\bar P(z) 
\end{gather*}
and
\begin{gather*}
R_i(z):=\sum_{j=0}^n\left(\sum_{-j\leq k<0}(3j+2k-2n+2)a_jP_{k-n+j,i}z^{n+k}\right).
\end{gather*}
An integrating factor is 
$$
\mu(z)=\exp\int - \frac{Q(z)}{2z\bar P(z)} \,dz
=\frac{1}{z^{(n-2)/2}\sqrt{\bar P(z)}}.
$$
and so 
\begin{equation}
P_i(z):=z^{(n-2)/2}\sqrt{\bar P(z)}\int \frac{R_i(z)}{2z^{n/2}\bar P(z)^{3/2}} \,dz.
\end{equation}
The way we interpret the right hand hyperelliptic integral ($\bar P(0)=a_n=1\neq 0$) is to expand $R_l(z)/\bar P(z)^{3/2}$ in terms of a Taylor series about $z=0$ and then formally integrate term by term and multiply the result by series for $z^{(n-2)/2}\sqrt{\bar P(z)}$. 
 Let us explain this more precisely.

One can expand both $\sqrt{\bar P(z)}$ and $1/\bar P(z)^{3/2}$ using Bell polynomials and Fa\`a di Bruno's formula as follows.  
The Bell polynomials in the variables $z_1,z_2,z_3,\dots$ are defined to be 
\begin{align*}
B_{n,k}(z_1,\dots,z_{n-k+1}):=\sum \frac{n!}{l_1!l_2!\cdots l_{n-k+1}!}\left(\frac{z_1}{1!}\right)^{l_1} \cdots \left(\frac{z_{n-k+1}}{(n-k+1)!!}\right)^{l_{n-k+1}}
\end{align*}
where the sum is over $l_1+l_2+\cdots =k$ and $l_1+2l_2+3l_3+\cdots =n$ (see \cite{MR1502817}).

Now Fa\`a di Bruno's formula  \cite{dB} (discovered earlier by Arbogast \cite{Arbogast}) for the $n$-derivative of $f(g(x))$ is 
\begin{align*}
\frac{d^n}{dx^n}f(g(x))=\sum_{l=0}^nf^{(l)}(g(x))B_{n,l}(g'(x),g''(x),\dots, g^{(n-l+1)}(x)).
\end{align*}
Setting $f(x)=x^{-3/2}$, $g(x)=\bar P(x)$ we get 
\begin{equation}
f^{(n)}(x)=\frac{(-1)^n(2n+1)!!}{2^nx^{(2n+3)/2}}
\end{equation}
and $\bar P^{(k)}(0)=k!a_{n-k}$ so that 
\begin{align*}
\frac{d^n}{dx^n}f(g(x))|_{x=0}=\sum_{l=0}^n\frac{(-1)^ l(2 l+1)!!}{2^ l }B_{n, l}(a_{n-1},2a_{n-2},\dots,(n- l+1)!a_{l-1}).
\end{align*}
As a consequence 
\begin{align*}
\frac{1}{\bar P(z)^{3/2} }&=\sum_{n=0}^\infty \frac{1}{n!}\frac{d^n}{d z^n}f(g( z))|_{ z=0} z^n \\
&=\sum_{n=0}^\infty \frac{1}{n!} \left(\sum_{l=0}^n\frac{(-1)^ l(2 l+1)!!}{2^ l }B_{n, l}(a_{n-1},2a_{n-2},\dots,(n- l+1)!a_{l-1})\right) z^n ,
\end{align*}
and hence
\begin{equation}
P_n(a_0,\dots,a_{n-1})= \frac{1}{n!}  \sum_{l=0}^n\frac{(-1)^ l(2 l+1)!!}{2^ l }B_{n, l}(a_{n-1},2a_{n-2},\dots,(n- l+1)!a_{l-1}).
\end{equation}

Similarly for $\sqrt{\bar P}(z)$ we set $f(z)=\sqrt{z}$ so that 
$$
f^{(k)}(z)=\frac{(-1)^{k+1}(2k-3)!!}{2^kz^{(2k-1)/2}}
$$ 
for $n\geq 0$ 
and thus 
\begin{align*}
\sqrt{\bar P(z)}&=\sum_{n=0}^\infty \frac{1}{n!} \left(\sum_{l=0}^n\frac{(-1)^{l+1}(2l-3)!!}{2^l }B_{n, l}(a_{n-1},2a_{n-2},\dots,(n- l+1)!a_{l-1})\right) z^n.
\end{align*}
We now take \eqnref{recursion1}
\begin{equation}
2ka_0\overline{t^{k-1}u\,dt}= -\sum_{j=1}^{n}(3j+2k)a_j\overline{t^{k+j-1}u\,dt},
\end{equation}
 and write it as
 \begin{equation}\label{tneg}
-2(m-1)a_0\overline{t^{-m}u\,dt}= -\sum_{j=1}^{n}(3j-2m+2)a_j\overline{t^{-(m-j)}u\,dt}.
\end{equation}
which certainly is true for $m\geq n+1$.  This leads us to the recursion relation 
\begin{equation}\label{recursion3} 
-2(m-1)a_0Q_{m,i}=-\sum_{j=1}^{n}(3j -2m+2)a_jQ_{m-j,i}
\end{equation}
for $m\geq n+1$ with the same initial condition
$Q_{m,i}=\delta_{m,-i}$, $1\leq m\leq n$ and $-n\leq i\leq -1$. 
We then form the formal power series 
\begin{equation}
Q_i(z):=Q_i(a_0,a_1,\dots, a_{n-1},z)=\sum_{k\geq n+1}Q_{k-n,i}z^k=\sum_{k\geq 1}Q_{k,i}z^{k+n},
\end{equation}
for $1\leq i\leq n$. 
As above one can show that this formal series must satisfy

\begin{equation}
\frac{d}{dz}Q_i(z)-\frac{Q(z)}{2z P(z)}Q_i(z)=\frac{S_i(z)}{2z P(z)}
\end{equation}
where
\begin{gather}
P(z):=\sum_{j=0}^na_jz^{j},\quad  Q(z):=zP'(z)+2(n+1)P(z) ,\label{pqs}
\end{gather}
and
\begin{gather*}
   S_i(z):=-\sum_{m=1}^n\left(  \sum_{j=0}^{m-1} (3j -2m+2)a_jQ_{m-j,i}\right)z^{m+n} .\end{gather*}
Indeed
\begin{align*}
2z  P(z)\frac{d}{dz}Q_i(z)-Q(z)Q_i(z)
&=   \sum_{k\geq 1}\sum_{j=0}^n(2(k+n)-j-2n-2)a_jQ_{k,i}z^{j+k+n}\\
&= -   \sum_{k\geq 1}\sum_{j=0}^n(j-2k+2)a_jQ_{k,i}z^{j+k+n}\\
&=  - \sum_{m\geq n+1} \left( \sum_{j=0}^n(3j -2m+2)a_jQ_{m-j,i}\right)z^{m+n} \\
&\quad - \sum_{m=1}^n\left(  \sum_{j=0}^{m-1} (3j -2m+2)a_jQ_{m-j,i}\right)z^{m+n} \\
&=S_i(z).
\end{align*}

 An integrating factor is 
$$
\mu(z)=\exp\int - \frac{Q(z)}{2z  P(z)} \,dz
=\frac{1}{z^{n+1}\sqrt{ P(z)}},
$$
and so 
\begin{equation}
Q_i(z):=  z^{n+1}\sqrt{  P(z)} \int \frac{S_i(z)}{2 z^{n+2}P(z)^{3/2}} \,dz.
\end{equation}
\subsection{Example}  In \cite{MR3090080} we considered $p(t)=z^4-2c z^2+1$.
The polynomials $P_{-4,n}$ have generating series
\begin{align*}
P_{-4}(c,z)&=z\sqrt{1-2 c z^2+z^4}\int \frac{4cz^2-1}{z^2(z^4-2c z^2+1)^{3/2}}\, dz=\sum_{n=0}^\infty P_{-4,n}(c)z^n \\
&=1+z^4+\frac{4c}{5}z^6 +\frac{1}{35} \left(32 c^2-5\right) z^8+\frac{16}{105} c \left(8 c^2-3\right) z^{10} \\
 &\quad -\frac{\left(2048 c^4-1248 c^2+75\right) }{1155}z^{12}+O(z^{14})
\end{align*}
and
$P_{-4,n}(c)$ satisfy the following recursion:
\begin{equation}
(6+2k)P_{k+4}(c)
=4k cP_{k+2}(c)-2(k-3)P_{k}(c).
\end{equation}
The main result of \cite{MR3090080} is that these polynomials are particular examples of the non-classical associated Jacobi polynomials and they satisfy the following fourth order linear differential equation: 
\begin{thm}[\cite{MR3090080}, Theorem 3.1.1]
 The polynomials $P_n=P_{-4,n}$ satisfy the following differential equation:
 \begin{gather*} 
16 ( c^2-1)^2   P_{n}^{(iv)}+160 c  (c^2-1 )  P_{n}''' -8  (c^2  (n^2-4 n-46 )-n^2+4 n+22 )P_{n}''  \\
  -24 c ( n^2 - 4 n-6)P_{n}'  +(n-4)^2 n^2P_{n}=0.
\end{gather*}
\end{thm}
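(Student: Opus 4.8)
The plan is to exploit the first-order linear differential equation in $z$ satisfied by the generating series
\[
w = w(c,z) := P_{-4}(c,z) = \sum_{n\ge 0} P_n(c)\, z^n , \qquad P_n := P_{-4,n},
\]
and to promote it to a fourth-order linear differential equation in $c$ for the coefficients. Specializing the construction above to $p(t) = t^4 - 2ct^2 + 1$ (of degree four) gives $\bar P(z) = z^4 - 2cz^2 + 1$, $Q(z) = z\bar P'(z) + 2\bar P(z) = 6z^4 - 8cz^2 + 2$ and $R_{-4}(z) = 8cz^2 - 2$, so that, writing $\theta := z\,\partial_z$ for the Euler operator, the defining equation reads
\begin{equation}\label{planstar}
2\,(z^4 - 2cz^2 + 1)\,\theta w \;-\; (6z^4 - 8cz^2 + 2)\, w \;=\; 8cz^2 - 2 ,
\end{equation}
equivalently $w = z\sqrt{\bar P}\int (4cz^2-1)\,z^{-2}\bar P(z)^{-3/2}\,dz$. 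Since the integrand is even in $z$ and its primitive (including the $z^{-1}$ term) is odd, $w$ is even in $z$; it is then convenient to set $x = z^2$ and $w = W(c,x)$, so that $P_{2m} = [x^m]W$ and \eqref{planstar} becomes $4(x^2 - 2cx + 1)\,\vartheta W - (6x^2 - 8cx + 2)W = 8cx - 2$ with $\vartheta := x\,\partial_x$.

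The target is a linear partial differential equation satisfied by $w$ of the form
\begin{equation}\label{planpde}
16(c^2-1)^2\,\partial_c^4 w + 160c(c^2-1)\,\partial_c^3 w - 8\big[(c^2-1)(\theta^2-4\theta) - 46c^2 + 22\big]\partial_c^2 w - 24c\,(\theta^2-4\theta-6)\,\partial_c w + (\theta^2-4\theta)^2 w = 0 ,
\end{equation}
whose coefficient operators involve only $c$ and $\theta$ with no bare power of $z$. Granting \eqref{planpde}, the theorem is immediate: extracting the coefficient of $z^n$ and using $\theta\, z^n = n z^n$ (hence $\theta^2-4\theta \mapsto n^2-4n$ and $(\theta^2-4\theta)^2 \mapsto (n-4)^2 n^2$) turns \eqref{planpde} into exactly the stated equation for $P_n = P_{-4,n}$, after the identity $(c^2-1)(n^2-4n) - 46c^2 + 22 = c^2(n^2-4n-46) - n^2 + 4n + 22$.

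To produce \eqref{planpde} I would iterate the following step, beginning from \eqref{planstar}. Differentiate the current relation with respect to $c$ (noting $\theta$ commutes with $\partial_c$); in the result replace each $\theta\,\partial_c^j w$ — and each $\theta^2\,\partial_c^j w$, obtained by applying $\theta$ once more — by the expression for it coming from the relations already produced; for $j=0$ this is \eqref{planstar} rearranged as $\theta w = \big((6z^4-8cz^2+2)w + 8cz^2-2\big)\big/\big(2(z^4-2cz^2+1)\big)$. Then clear the denominators, which are powers of $\bar P(z)$. Running this through $\partial_c^4$ produces, at each stage, a polynomial identity among $\partial_c^j w$, $\theta\,\partial_c^j w$ and $\theta^2\,\partial_c^j w$; a final linear elimination over $\mathbb C(z,c)$ among the quantities $\theta^a\,\partial_c^j w$ (a creative-telescoping computation) isolates the $z$-free combination \eqref{planpde}. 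Conceptually this is the Gauss--Manin / Picard--Fuchs computation for the second-kind hyperelliptic integral $\int(4cz^2-1)\,z^{-2}\bar P(z)^{-3/2}\,dz$ attached to the curve $u^2 = p(t)$, whose $c$-monodromy is controlled by a fourth-order operator, the prefactor $z\sqrt{\bar P}$ being accounted for by \eqref{planstar}. An independent route uses the three-term recurrence $(2k+6)P_{k+4} = 4kc\,P_{k+2} - 2(k-3)P_k$ to identify $P_{-4,2m}$ with an associated Jacobi polynomial, whose known fourth-order equation yields \eqref{planpde} once the parameters are matched.

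The main obstacle is the elimination itself: one must carry out the four successive differentiations and substitutions while tracking the non-commutation $\theta\, z^k = z^k(\theta+k)$ of operators, and then verify that after clearing denominators every explicit power of $z$ cancels and the surviving coefficients are precisely $16(c^2-1)^2$, $160c(c^2-1)$, $-8\big[(c^2-1)(\theta^2-4\theta)-46c^2+22\big]$, $-24c(\theta^2-4\theta-6)$ and $(\theta^2-4\theta)^2$. The cancellation of the powers of $z$ is the genuinely nontrivial point — it reflects holonomicity of $w$ — and the bookkeeping is best organized by reducing everything first to a single relation among $W$, $\vartheta W$, $\vartheta^2 W$ and $\partial_c^j W$ in the variable $x = z^2$, and only then clearing $x$. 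As running sanity checks one verifies \eqref{planpde} on the low-degree data $P_0 = 1$, $P_2 = 0$, $P_4 = 1$, $P_6 = \tfrac{4c}{5}$: at $n=4$ it collapses to $(4-4)^2\,4^2\cdot 1 = 0$ (the lower $c$-derivatives of $P_4\equiv 1$ vanishing), while at $n=6$ it reads $-24c\,(6^2-4\cdot 6-6)\,\tfrac{4}{5} + 6^2(6-4)^2\cdot\tfrac{4c}{5} = -\tfrac{576c}{5} + \tfrac{576c}{5} = 0$.
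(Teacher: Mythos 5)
Your reduction of the theorem to the single operator identity you label \eqref{planpde} is sound as far as it goes: the specialization $\bar P(z)=z^4-2cz^2+1$, $Q(z)=6z^4-8cz^2+2$, $R_{-4}(z)=8cz^2-2$ is correct, your first-order relation in $z$ is exactly the paper's defining ODE cleared of denominators, and the coefficient-extraction step (replacing $\theta$ by $n$ and checking $(c^2-1)(n^2-4n)-46c^2+22=c^2(n^2-4n-46)-n^2+4n+22$) is fine. But the proof has a genuine gap at its center: the fourth-order operator in \eqref{planpde} \emph{is} the content of the theorem, and you never derive it. You describe an elimination scheme (differentiate in $c$, substitute for $\theta^a\partial_c^j w$ using earlier relations, clear powers of $\bar P$, then eliminate over $\mathbb C(z,c)$) and assert that it terminates in a $z$-free relation with precisely the coefficients $16(c^2-1)^2$, $160c(c^2-1)$, $-8[(c^2-1)(\theta^2-4\theta)-46c^2+22]$, $-24c(\theta^2-4\theta-6)$, $(\theta^2-4\theta)^2$ — but you yourself flag this cancellation as ``the genuinely nontrivial point'' and do not carry it out. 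Holonomicity arguments only guarantee that \emph{some} linear relation of finite order exists; they do not produce the specific operator, and your sanity checks at $n=4$ and $n=6$ only probe the two lowest-order terms (all $c$-derivatives of order $\ge 2$ vanish on those data), so they do not test the fourth-, third- or second-derivative coefficients at all. As written, the argument establishes the theorem only conditionally on an unverified computation.

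For comparison: the paper under review does not prove this statement either — it quotes it from the earlier work \cite{MR3090080}, where the proof runs through your ``independent route,'' namely using the three-term recurrence $(2k+6)P_{k+4}=4kcP_{k+2}-2(k-3)P_k$ to identify the $P_{-4,n}$ with (non-classical) associated Jacobi polynomials and then invoking the known fourth-order differential equation for that family. If you want a self-contained argument along your Picard--Fuchs/creative-telescoping line, you must actually exhibit the four differentiation-and-reduction steps and the final elimination (or certify it with an explicit telescoper computation); alternatively, if you take the associated-Jacobi route, you must carry out the parameter matching between your recurrence and the standard one for associated Jacobi polynomials and then transport their fourth-order equation — neither of which is done in the proposal.
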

In the above cited work we also studied $P_{k,n}$ the remaining cases of $k=-3,-2,-1$.   In particular all of these families of polynomials are orthogonal with respect to some measure and satisfy an at most 4th order linear differential equation. 
\subsection{Example}  Let us take $p(t)=t^6-2bt^3+1$.  Then we get the recursion relation for the $P_{k,i}$'s as 
\begin{equation}
(2k+8)P_{k,i}=-\sum_{j=0}^{5}(3j+2k-10)a_jP_{k+j-6,i}=2b(2k-1)P_{k-3,i}-(2k-10)P_{k-6,i},
\end{equation}
for $k\geq 0$.
So that one can calculate by hand for example the first five nonzero nonconstant polynomials for $i=-1$; 
\begin{align*}
P_{2,-1}=\frac{b}{2},\quad P_{5,-1}=\frac{b^2}{2},\quad P_{8,-1}=\frac{1}{8} b \left(5
   b^2-1\right),\quad P_{11,-1}=\frac{1}{8} b^2 \left(7 b^2-3\right),\quad P_{14,-1}=\frac{1}{16}
   \left(21 b^5-14 b^3+b\right).
\end{align*}
In this setting of $p(t)$ we have
\begin{gather*}
R_{-1}(z)=6z^5,\quad R_{-2}(z)=4z^4,\quad R_{-3}(z)=2z^3,\quad R_{-4}(z)=6bz^5,  \\
R_{-5}(z)=10bz^4-2z,\quad R_{-6}(z)=14bz^3-4.
\end{gather*}
and as an example
\begin{align*}
P_{-1}(z)&=z^{2}\sqrt{z^6-2bz^3+1}\int \frac{3z^2}{(z^6-2bz^3+1)^{3/2}} \,dz \\
&=z^5+\frac{b z^8}{2}+\frac{b^2 z^{11}}{2}+\frac{1}{8} b \left(5 b^2-1\right)
   z^{14} +\frac{1}{8} b^2 \left(7 b^2-3\right) z^{17} \\
   &\quad+\frac{1}{16} \left(21
   b^5-14 b^3+b\right) z^{20}+\frac{1}{16} b^2 \left(33 b^4-30 b^2+5\right)
   z^{23}+O\left(z^{24}\right).
\end{align*}
Note in the integral we take as the constant of integration to be $0$.

One could conjecture that these polynomials $P_{k,i}$ are nonclassical orthogonal and satisfy $6$-th order linear differential equations with polynomial coefficients in $b$ for each fixed $i$, $-6\leq i\leq -1$ .   A similar conjecture can be made for polynomials $P_{k,i}$ arising from $p(t)=t^{2k}-2bt^k+1$ for $k\geq 4$ where the order of the differential equations would be $2k$.   We plan to investigate this in future work.
\section{Cocycles}

First we give an explicit description of the cocyles contributing to the {\it even} part of the hyperelliptic current algebra. 
Set
\begin{equation}
\omega_0:=\overline{t^{-1}\,dt},\enspace \omega_k:=\overline{t^{-k}u\,dt},\quad \text{ for }1\leq k\leq n\end{equation}
where we omit $\omega_n$ if $a_0=0$.

\begin{lem}[cf. \cite{MR1303073}, Prop. 4.2] \label{evenlem} 
 For $i,j\in\mathbb Z$ one has
\begin{equation}
\overline{t^i\,d(t^j)}= j \delta_{i+j,0}\omega_0
\end{equation}
and 
\begin{equation}
\overline{t^{i}u\,d(t^{j}u)}=\sum_{k=0}^n\left(j+\frac{1}{2}k\right)a_k\delta_{i+j,-k}\omega_0.
\end{equation}

\end{lem}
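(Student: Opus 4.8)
The plan is to deduce both formulas from one elementary observation about $\Omega_R^1/dR$: for every $m\in\mathbb Z$,
\begin{equation}\label{keyreduction}
\overline{t^{m-1}\,dt}=\delta_{m,0}\,\omega_0 .
\end{equation}
This holds because $t^{m-1}\,dt=\tfrac1m\,d(t^{m})$ lies in $dR$ whenever $m\neq 0$, whereas for $m=0$ the class is $\overline{t^{-1}\,dt}=\omega_0$ by definition; division by $m$ is harmless over $\mathbb C$, and $\omega_0$ is a genuine basis vector by the basis theorem quoted above.

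The first identity is then immediate: in $\Omega_R^1$ one has $t^i\,d(t^j)=j\,t^{i+j-1}\,dt$, so \eqnref{keyreduction} with $m=i+j$ gives $\overline{t^i\,d(t^j)}=j\,\delta_{i+j,0}\,\omega_0$.

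For the second identity the extra ingredient is the defining relation $u^2=p(t)$: differentiating it in $\Omega_R^1$ yields $2u\,du=p'(t)\,dt$, hence $u\,du=\tfrac12\,p'(t)\,dt$. I would then expand by the Leibniz rule and substitute $u^2=p(t)$ and $u\,du=\tfrac12 p'(t)\,dt$:
\begin{align*}
t^{i}u\,d(t^{j}u)
&=t^{i}u\bigl(j\,t^{j-1}u\,dt+t^{j}\,du\bigr)\\
&=j\,t^{i+j-1}u^{2}\,dt+t^{i+j}u\,du\\
&=j\,t^{i+j-1}p(t)\,dt+\tfrac12\,t^{i+j}p'(t)\,dt .
\end{align*}
Writing $p(t)=\sum_{k=0}^{n}a_k t^{k}$ and $p'(t)=\sum_{k=0}^{n}k\,a_k t^{k-1}$, the coefficient of $t^{i+j+k-1}\,dt$ on the right is exactly $\bigl(j+\tfrac12 k\bigr)a_k$, so
\begin{equation*}
t^{i}u\,d(t^{j}u)=\sum_{k=0}^{n}\Bigl(j+\tfrac12 k\Bigr)a_k\,t^{i+j+k-1}\,dt ,
\end{equation*}
and applying \eqnref{keyreduction} with $m=i+j+k$ to each summand leaves precisely $\sum_{k=0}^{n}\bigl(j+\tfrac12 k\bigr)a_k\,\delta_{i+j,-k}\,\omega_0$, as asserted.

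I do not expect a genuine obstacle here: the argument is purely formal. The only points to keep straight are that $2u\,du=p'(t)\,dt$ is an identity in the K\"ahler module $\Omega_R^1$ \emph{before} passing to the quotient by $dR$; that the divisions by $2$ and by $m$ are legitimate over $\mathbb C$; and that under the standing hypothesis $a_0\neq 0$ no basis vector $\omega_k$ is suppressed, so that $\omega_0\neq 0$ and \eqnref{keyreduction} is unambiguous.
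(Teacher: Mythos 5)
Your proposal is correct and follows essentially the same route as the paper: differentiate $u^2=p(t)$ to get $2u\,du=p'(t)\,dt$, expand $t^iu\,d(t^ju)$ by the Leibniz rule, and reduce $t^{m-1}\,dt$ modulo exact forms; you merely make explicit the reduction $\overline{t^{m-1}\,dt}=\delta_{m,0}\,\omega_0$ that the paper leaves implicit. (Note only that $\omega_0$ is a basis vector regardless of whether $a_0=0$, so your closing caveat about $a_0\neq 0$ is unnecessary for this lemma.)
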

\begin{proof}  First observe that 
$$
2u\,du=d(u^2)=p'(t)\,dt=\sum_{i=0}^nia_it^{i-1}\,dt.
$$

 The second congruence then follows from
\begin{align*}
t^{i}u\,d(t^{j}u)&=jt^{i+j-1}u^2\,dt+t^{i+j}u\,du \\
&=jt^{i+j-1}\left(\sum_{k=0}^na_kt^k\right)\,dt+\frac{1}{2}t^{i+j}\sum_{k=0}^nka_kt^{k-1}\,dt \\
&=\sum_{k=0}^n\left(j+\frac{1}{2}k\right)a_kt^{i+j+k-1}\,dt .
\end{align*}
\end{proof}

For the odd part we have 
\begin{lem}[cf. \cite{MR1303073}, Prop. 4.2] \label{oddlem}  
 For $i,j\in\mathbb Z$ one has
\begin{equation}
\overline{t^{i}u\,d(t^{j})}=j\begin{cases} \sum_{k=1}^{n}P_{i+j-1,-k}\omega_{k} &\quad \text{ if }i+j\geq -n +1,\\
 \sum_{k=1}^{n}Q_{-i-j+1,-k}\omega_{k} &\quad \text{ if }i+j< -n+1.
 \end{cases}
\end{equation}
\end{lem}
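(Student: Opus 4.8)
The plan is to compute $\overline{t^{i}u\,d(t^{j})} = j\,\overline{t^{i+j-1}u\,dt}$ directly and then use the recursion relations defining the $P_{k,i}$ and $Q_{m,i}$ to identify this residue class in terms of the basis $\{\omega_1,\dots,\omega_n\}$. The key point is that $\overline{t^{i}u\,d(t^{j})} = j t^{i+j-1}u\,dt$ only depends on the exponent $\ell := i+j-1$, so it suffices to show that for every integer $\ell$ one has
\begin{equation*}
\overline{t^{\ell}u\,dt} = \begin{cases} \sum_{k=1}^{n}P_{\ell,-k}\,\omega_k & \text{ if } \ell \geq -n,\\ \sum_{k=1}^{n}Q_{-\ell,-k}\,\omega_k & \text{ if } \ell < -n. \end{cases}
\end{equation*}
The base cases $-n\leq \ell\leq -1$ are immediate from the definitions $\omega_k = \overline{t^{-k}u\,dt}$ together with the initial conditions $P_{\ell,-k}=\delta_{\ell,k}$ (for $-n\leq\ell\leq-1$, writing $\ell=-k'$ this reads $P_{-k',-k}=\delta_{k',k}$) and $Q_{m,-k}=\delta_{m,k}$ for $1\leq m\leq n$.

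For $\ell\geq 0$ I would argue by induction on $\ell$ using \eqnref{recursion1} with $m=2$, which after specializing reads $((3)n+2i')t^{n+i'-1}u\,dt\equiv -\sum_{j=0}^{n-1}(3j+2i')a_jt^{i'+j-1}u\,dt$; rewriting the left side exponent as $\ell=n+i'-1$ so that $i'=\ell-n+1$, this expresses $\overline{t^{\ell}u\,dt}$ (coefficient $2\ell+n+2 \neq 0$ since $n\geq 1$ and $\ell\geq 0$) as a linear combination of the $\overline{t^{\ell-n+j}u\,dt}$ for $0\leq j\leq n-1$, i.e.\ of residue classes with strictly smaller exponent. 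By the inductive hypothesis each of these equals $\sum_k P_{\ell-n+j,-k}\omega_k$, and comparing with the recursion \eqnref{recursion2} $(2k+n+2)P_{k,i}=-\sum_{j=0}^{n-1}(3j+2k-2n+2)a_jP_{k-n+j,i}$ (with $k$ there equal to our $\ell$) shows the coefficient of each $\omega_k$ matches. Symmetrically, for $\ell\leq -n-1$ I would induct downward using \eqnref{tneg}, namely $-2(m-1)a_0\overline{t^{-m}u\,dt}= -\sum_{j=1}^{n}(3j-2m+2)a_j\overline{t^{-(m-j)}u\,dt}$ with $m=-\ell\geq n+1$; here $a_0\neq 0$ and $m-1\geq n\geq 1$ so the leading coefficient is nonzero, and this expresses $\overline{t^{\ell}u\,dt}=\overline{t^{-m}u\,dt}$ in terms of $\overline{t^{-(m-j)}u\,dt}$ for $1\leq j\leq n$, all of which have exponent $-(m-j)>-m=\ell$, hence are covered by the (downward) inductive hypothesis. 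Matching coefficients against \eqnref{recursion3} $-2(m-1)a_0Q_{m,i}=-\sum_{j=1}^{n}(3j-2m+2)a_jQ_{m-j,i}$ closes the induction.

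The main obstacle, and the only place requiring genuine care, is the bookkeeping at the boundary between the two regimes: one must check that \eqnref{tneg} is valid exactly for $m\geq n+1$ (it is derived from \eqnref{recursion1} which holds for all relevant exponents, but the rewriting in the excerpt is only asserted for $m\geq n+1$), and that the downward induction for $\ell\leq -n-1$ never needs a value $\overline{t^{-(m-j)}u\,dt}$ with $m-j$ still $\geq n+1$ handled inconsistently — in fact for $j=1,\dots,n$ the exponents $-(m-j)$ range over $-(m-1),\dots,-(m-n)$, and when $m=n+1$ the smallest is $-1$, landing precisely in the base-case range, so the induction is well-founded. One should also note the harmless index shift between \eqnref{recursion1} as first displayed and its restatement just before \eqnref{tneg}; aligning these conventions once at the start avoids sign errors. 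Everything else is a routine comparison of recursions, so once the boundary analysis is pinned down the proof is complete.
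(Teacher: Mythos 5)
Your argument is correct and is essentially the paper's own proof: reduce to $\overline{t^{\ell}u\,dt}$ with $\ell=i+j-1$, establish the base cases $-n\leq \ell\leq -1$ from the initial conditions, then run an upward induction matching \eqnref{recursion1} (with $m=2$) against \eqnref{recursion2} and a downward induction matching \eqnref{tneg} against \eqnref{recursion3}, using $a_0\neq 0$ for the latter. Your boundary bookkeeping (nonvanishing leading coefficients, the $m=n+1$ step landing in the base-case range) and your reading of the initial condition as $P_{-k',-k}=\delta_{k',k}$ are exactly what is needed, so nothing is missing.
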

\begin{proof}
We have 
\begin{equation}\label{eqn1}
(2(i+j)+n)t^{i+j-1}u\,dt\equiv -\sum_{k=0}^{n-1}(3k+2(i+j)-2n))a_kt^{i+j-n+k-1}u\,dt\mod dR.
\end{equation}
and similarly
\begin{equation} \label{eqn2}
(2(i+j)+n)P_{i+j-1,i}=-\sum_{k=0}^{n-1}(3k+2(i+j)-2n)a_kP_{i+j-n+k-1,i}
\end{equation}

%
%
So now assume for $r\geq -n$ 
\begin{align*}
\overline{t^ru\,dt}
&=\sum_{k=0}^{n-1}P_{r,k-n}\omega_{n-k}
\end{align*}
(which certainly holds for $r=-n,\dots,-1$).
Then 
\begin{align*}
\overline{t^{r+1}u\,dt} &=-\sum_{k=0}^{n-1}\left(\frac{3k+2r-2n+4}{n+2r+4}\right)a_k\overline{t^{r-n+k+1}u\,dt} \\
&=-\sum_{l=0}^{n-1}\sum_{k=0}^{n-1}\left(\frac{3k+2r-2n+4}{n+2r+4}\right)a_kP_{r+1-n+k,l-n}\omega_{n-l} \\
&=\sum_{l=0}^{n-1}P_{r+1,l-n}\omega_{n-l}
\end{align*}
Then for $i+j\geq -n+1$  one has
\begin{equation}
\overline{t^{i}u\,d(t^j)}=j\overline{t^{i+j-1}u\,dt}=j\sum_{k=0}^{n-1}P_{i+j-1,k-n}\omega_{n-k} =j\sum_{k=1}^{n}P_{i+j-1,-k}\omega_{k}.
\end{equation}

So now assume for $r\geq 1$ 
\begin{align*}
\overline{t^{-r}u\,dt}
&=\sum_{k=0}^{n-1}Q_{r,k-n}\omega_{n-k}
\end{align*}
(which certainly holds for $r=1,\dots ,n$ as $Q_{m,i}=\delta_{m,-i}$, $1\leq m\leq n$ and $-n\leq i\leq -1$.).

For $r\geq n$ we have by \eqnref{recursion3}
 \begin{align*}
\overline{t^{-(r+1)}u\,dt} &= \sum_{j=1}^{n}\frac{(3j-2r)a_j}{2ra_0}\overline{t^{-(r+1-j)}u\,dt}.
 \\
&=\sum_{k=0}^{n-1}\sum_{j=1}^{n}\frac{(3j-2r)a_j}{2ra_0}Q_{r+1-j,k-n}\omega_{n-k} \\
&=\sum_{k=0}^{n-1}Q_{r+1,k-n}\omega_{n-k}.
\end{align*}

Then for $i+j< -n+1$ we have 
\begin{equation}
\overline{t^{i}u\,d(t^j)}=j\overline{t^{i+j-1}u\,dt}=j\sum_{k=0}^{n-1}Q_{-i-j+1,k-n}\omega_{n-k} =j\sum_{k=1}^{n}Q_{-i-j+1,-k}\omega_{k}.
\end{equation}

\end{proof}

\section{Main result}
\begin{thm}[cf. \cite{MR2373448}]\label{mainresult}  Suppose $a_0\neq 0$.   Let $\mathfrak g$ be a simple finite dimensional Lie algebra over the complex numbers with Killing form $(\,|\,)$ and define $\psi_{ij}(c)\in\Omega_R^1/dR$ by
\begin{equation}
\psi_{ij}(c)=  \begin{cases} \sum_{k=1}^{n}P_{i+j-1,-k}\omega_{k} &\quad \text{ if }i+j\geq -n +1,\\
 \sum_{k=1}^{n}Q_{-i-j+1,-k}\omega_{k} &\quad \text{ if }i+j< -n+1.
 \\
\end{cases}
\end{equation} 
The universal central extension of the hyperelliptic Lie algebra $\mathfrak g \otimes R$ is the $\mathbb Z_2$-graded Lie algebra 
$$
\widehat{\mathfrak g}=\widehat{\mathfrak g}^0\oplus \widehat{\mathfrak g}^1,
$$
where
$$
\widehat{\mathfrak g}^0=\left(\mathfrak g\otimes \mathbb C[t,t^{-1}]\right)\oplus \mathbb C\omega_{0},\qquad \widehat{\mathfrak g}^1=\left(\mathfrak g\otimes \mathbb C[t,t^{-1}]u\right)\oplus\bigoplus_{k=1}^{n}\left( \mathbb C\omega_{k}\right)$$
with bracket
\begin{align}
[x\otimes t^i,y\otimes t^j]&=[x,y]\otimes t^{i+j}+\delta_{i+j,0}j(x,y)\omega_0, \label{even1}\\ \notag \\
[x\otimes t^{i}u,y\otimes t^{j}u]&=[x,y]\otimes t^{i+j}p(t)  +\sum_{k=0}^n\left(j+\frac{1}{2}k\right)a_k\delta_{i+j,-k}\omega_0,  \label{even2} \\ \notag \\
[x\otimes t^{i}u,y\otimes t^{j}]&=[x,y]u\otimes t^{i+j}u+ j(x,y)\psi_{ij}(c). \label{odd}
\end{align}

\end{thm}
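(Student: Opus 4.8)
The plan is to invoke Kassel's theorem, which tells us that the universal central extension of $\mathfrak g\otimes R$ is the vector space $(\mathfrak g\otimes R)\oplus\Omega_R^1/dR$ with bracket $[x\otimes f,y\otimes g]=[x,y]\otimes fg+(x,y)\overline{f\,dg}$. So everything reduces to translating this intrinsic description into the stated generators and relations. First I would fix the vector-space decomposition: using the basis $R=\mathbb C[t,t^{-1}]\oplus\mathbb C[t,t^{-1}]u$ coming from $u^2=p(t)$, we get $\mathfrak g\otimes R=(\mathfrak g\otimes\mathbb C[t,t^{-1}])\oplus(\mathfrak g\otimes\mathbb C[t,t^{-1}]u)$, and Theorem~2.1 (Bremner) gives $\Omega_R^1/dR=\mathbb C\omega_0\oplus\bigoplus_{k=1}^n\mathbb C\omega_k$ (all present since $a_0\neq0$). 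The $\mathbb Z_2$-grading is the obvious one: degree $0$ for $\mathbb C[t,t^{-1}]$-parts and $\omega_0$, degree $1$ for the $u$-parts and $\omega_1,\dots,\omega_n$; one checks the bracket respects it because $u\cdot u=p(t)\in\mathbb C[t,t^{-1}]$ and $\overline{fu\,d(gu)}$ lands in the span of $\omega_0$ while $\overline{fu\,d(g)}$ and $\overline{f\,d(gu)}$ land in the span of $\omega_1,\dots,\omega_n$.

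Next I would verify the three bracket formulas one at a time by specializing Kassel's formula to basis elements. For \eqref{even1}, with $f=t^i$, $g=t^j$, Kassel gives $[x,y]\otimes t^{i+j}+(x,y)\overline{t^i\,d(t^j)}$, and the first display of Lemma~\ref{evenlem} evaluates $\overline{t^i\,d(t^j)}=j\delta_{i+j,0}\omega_0$; done. For \eqref{even2}, take $f=t^iu$, $g=t^ju$, so $fg=t^{i+j}u^2=t^{i+j}p(t)\in\mathbb C[t,t^{-1}]$, giving the first term, and $\overline{f\,dg}=\overline{t^iu\,d(t^ju)}$ is exactly the second display of Lemma~\ref{evenlem}, namely $\sum_{k=0}^n(j+\tfrac12 k)a_k\delta_{i+j,-k}\omega_0$; done. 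For \eqref{odd}, take $f=t^iu$, $g=t^j$, so $fg=t^{i+j}u$, giving $[x,y]u\otimes t^{i+j}u$ (note $[x,y]\otimes t^{i+j}u$ is written $[x,y]u\otimes t^{i+j}u$ in the paper's notation for the degree-$1$ piece), and $\overline{f\,dg}=\overline{t^iu\,d(t^j)}=j\,\psi_{ij}(c)$ by Lemma~\ref{oddlem}, where the case split at $i+j\geq -n+1$ versus $i+j<-n+1$ matches the definition of $\psi_{ij}(c)$ verbatim; done.

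Finally I would note that these three families of brackets, extended bilinearly over the indices $i,j$ and the chosen basis of $\mathfrak g$, exhaust all brackets between basis elements of $(\mathfrak g\otimes R)\oplus\Omega_R^1/dR$ — the remaining brackets involving $\omega_0,\dots,\omega_n$ all vanish since $\Omega_R^1/dR$ is central in Kassel's construction — so they determine the Lie algebra completely and agree with Kassel's universal central extension. The Jacobi identity and universality are inherited for free from Kassel's theorem and require no separate check. The main obstacle, such as it is, is purely bookkeeping: making sure the recursions \eqref{recursion2} and \eqref{recursion3} defining $P_{k,i}$ and $Q_{m,i}$ are precisely the ones forced by the relation \eqref{recursion1} in $\Omega_R^1/dR$, with the correct index shifts and the correct regime of validity (the positive-power side governed by $P$, the negative-power side by $Q$, with the crossover determined by $a_0\neq 0$); but this identification has already been carried out in Lemma~\ref{oddlem}, so here it is only a matter of citing it. Thus the proof is essentially a three-line unwinding of Kassel's theorem against Lemmas~\ref{evenlem} and~\ref{oddlem}.
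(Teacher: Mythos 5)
Your proposal is correct and follows the same route as the paper: both rest on Kassel's description of the universal central extension together with Lemmas~\ref{evenlem} and~\ref{oddlem}, which the paper's own proof simply cites for the three bracket formulas. Your version merely spells out the vector-space decomposition, the $\mathbb Z_2$-grading, and the bookkeeping that the paper leaves implicit.
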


\begin{proof}
The identities \eqnref{even1} and \eqnref{even2} follow from \lemref{evenlem} whereas \eqnref{odd} follows from \lemref{oddlem}

\end{proof}

\section{}
 Any corrections or typos that are found will be updated on the math arXiv.

\def\cprime{$'$} \def\cprime{$'$} \def\cprime{$'$}
\def\cprime{$'$} \def\cprime{$'$} \def\cprime{$'$}

 \end{document}